\def \fig_size{ 3.5 }
\newtheorem{lemma}{Lemma}
\newcommand{\diag}{\operatorname{diag}}
\newcommand{\pattern}{\operatorname{Pattern}}
\newcommand{\Hinf}{\mathcal H_\infty}
\let\oldsum\sum
\renewcommand{\sum}{\displaystyle\oldsum}
\let\oldmin\min
\renewcommand{\min}{\displaystyle\oldmin}
\let\oldmax\max
\renewcommand{\max}{\displaystyle\oldmax}
\let\oldinf\inf
\renewcommand{\inf}{\displaystyle\oldinf}
\let\oldsup\sup
\renewcommand{\sup}{\displaystyle\oldsup}
\begin{document}

%%%%%%%%% TITLE
\title{How To Tame Your Sparsity Constraints}

\author{J. A. Lopez}
\thanks{The author is with the Department of Electrical \& Computer Engineering, Northeastern University, MA 02115, USA. E-mail: {\tt lopez.jo@husky.neu.edu}. This work was supported in part by the NSF Integrative Graduate Education and Research Traineeship (IGERT) program under grant DGE-0654176.}

%%%%%%%%%%%%%%%%%%%%%%%%%%%%%%%%%%
\keywords{robust control, sparse controller}

\begin{abstract}
We show that designing sparse $H_\infty$ controllers, in a discrete (LTI) setting, is easy when the controller is assumed to be an FIR filter. In this case, the problem reduces to a static output feedback problem with equality constraints. We show how to obtain an initial guess, for the controller, and then provide a simple algorithm that alternates between two (convex) feasibility programs until converging, when the problem is feasible, to a suboptimal $H_\infty$ controller that is automatically stable. As FIR filters contain the information of their impulse response in their coefficients, it is easy to see that our results provide a path of least resistance to designing sparse robust controllers for continuous-time plants, via system identification methods.
\end{abstract}

\date{May 30, 2015}

\maketitle

\section{Introduction}
Recently there has been much interest in designing controllers that satisfy so-called ``sparsity constraints'' -- a catch-all term for constraints that arise from information flow restrictions \cite{rotkowitz_lall_2006,wang_lopez_sznaier,lopez_wang_sznaier,veillette_et_al_1992}, by the control-loop topology \cite{aircraft_control_text,oliveira_geromel_bernussou_2000}, etc. The main difficulty in sparse controller design comes from the obscure relationship between sparsity constraints, imposed on the pattern of the closed-loop transfer matrix, and the closed-loop state-space description; which is where we prefer to work since modern control theory relies heavily on tools from linear algebra. As with non-sparse robust control design, there seem to be two principal roads to controller synthesis: the Youla parametrization and the linear matrix inequality (LMI) solutions. Some sparse control researchers have focused on determining theoretical conditions under which the Youla parametrization solution remains convex under sparsity constraints \cite{rotkowitz_lall_2006}; while others have made progress by adapting existing LMI tools so that they can extract sparse controllers by assuming an easier-to-work-with structure on the controller or in the LMI formulation \cite{oliveira_geromel_bernussou_2000,polyak_et_al_2013,palacios_et_al_2012}. Our approach also imposes a structure on the problem: we assume the controller is an FIR filter (and therefore also assume the plant is strongly stabilizable) but we do not require the sparsity pattern to be in block form nor do we impose a structure on the LMI factors. Moreover, our method uses an alternating scheme that culminates with a controller and a certificate of stability and robust performance. Perhaps the closest work to ours is \cite{lin_fardad_jovanovic}, where the authors also use an alternating scheme to design sparse controllers for the $H_2$ (static) state-feedback case. Our approach can be considered a generalization of their work since it includes static state-feedback controller design as a special case and can easily incorporate $H_2$ constraints, if desired. However, unlike \cite{lin_fardad_jovanovic}, we will not search for an optimal solution since there is little to gain from achieving the lowest $H_\infty$ norm \cite{mario_text}.

There are many advantages to using an FIR controller. We will show that by imposing this form on the controller, the problem of synthesizing (sparse) suboptimal $H_\infty$ controllers reduces to a static output-feedback problem, for an augmented system, with the sparsity constraints entering the problem through simple equality constraints. Although imposing an FIR structure also assumes the plant is strongly stabilizable, this assumption is not overly restrictive since it is not practical to implement unstable controllers, anyway \cite{cheng_cao_sun_2011,petersen_2009,chou_wu_leu_2003}. Apart from this assumption, we only assume that the plant is detectable and that the control inputs do not directly feed-through to the output. Thus, our method only has mild restrictions and we will demonstrate that the resulting controllers attain excellent bounds on the $H_\infty$ norm. The controllers are ultimately obtained using a simple algorithm that alternates between two (convex) feasibility programs until convergence, when the problem is feasible -- while we do not pursue convergence guarantees, we take comfort in knowing that our algorithm is simpler than other alternating schemes (ADMM and expectation-maximization) for which convergence proofs are available \cite{boyd_admm_2011,bishop_text}. Furthermore, other alternating schemes like ``D-K iteration'' have had success in the past.

In the following sections we will state the problem, present our main contribution, and demonstrate its use with examples.

%%%%%%%%%%%%%%%%%%%
\subsection{Preliminaries} 
Our notation is standard from the robust control literature \cite{mario_text, zhou_text, dullerud_paganini_text}. We consider the standard robust control loop shown in Figure \ref{fig:general_loop_structure}: where $G$ represents the discrete-time plant, $w$ represents the disturbance inputs, $u$ the control inputs, $z$ the regulated outputs, and $y$ the outputs available to the discrete-time controller, $K$ \cite{mario_text}. 

\begin{figure}[H]
\centering
\def\svgwidth{200pt}
\resizebox{\fig_size in}{!}{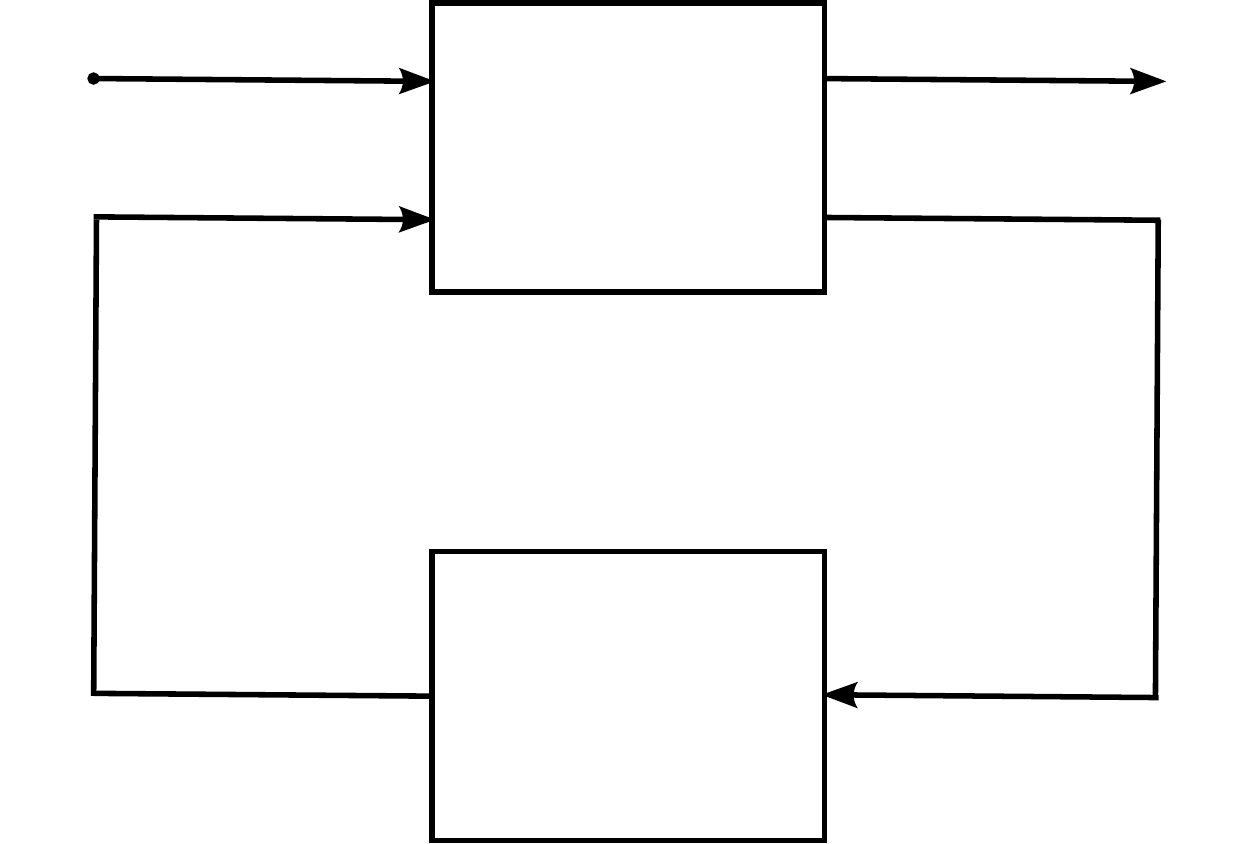}
\caption{Closed-loop system}
\label{fig:general_loop_structure}
\end{figure}

The state-space description is given by:
\begin{align}
x_{k+1} &= Ax_k + B_1 w_k + B_2 u_k\\
z_k &= C_1 x_k + D_{11} w_k + D_{12} u_k\notag\\
y_k &= C_2 x_k + D_{21} w_k \notag
\label{eq:system_equations}
\end{align}

For this system, we would like to minimize the $\Hinf$ norm of the transfer function from the disturbance inputs to the regulated outputs:
\begin{equation}
T_{zw} = \left[
\begin{array}{c|c}
A_{CL} & B_{CL}\\
\hline
C_{CL} & D_{CL}
\end{array}
\right]
\label{eq:tzw}
\end{equation}
where $(A_{CL}, B_{CL}, C_{CL}, D_{CL})$ is the system with an output feedback controller $u = Ky$ in the loop. 
Formally, we consider Problem 1 below.\\\\
\textbf{Problem 1.} Given $\mu>0$ and a binary pattern matrix, $S$, find a stabilizing controller $K\in S$ such that $\|T_{zw}\|_\infty^2<\mu$.\\\\
\textbf{Assumptions:}
The assumptions we make are that the pair $(A,B_2)$ is strongly stabilizable, $(A,C_2)$ is detectable, and that $D_{22}=0$ \cite{gahinet_apkarian}. \\

For our solution, we turn to a standard result in the robust control literature \cite{oliveira_geromel_bernussou_2002,bu_sznaier,gahinet_apkarian,dullerud_paganini_text}.

\begin{lemma} The inequality $\|T_{zw}\|_\infty^2<\mu$ holds if, and only if, there is a symmetric matrix $P$ such that
\begin{equation}
\left[
\begin{array}{cccc}
P & AP & B & 0\\
PA^T & P & 0 & PC^T\\
B^T & 0 & I & D^T\\
0 & CP & D & \mu I
\end{array}
\right]>0
\label{eq:bounded_real_lemma}
\end{equation} is feasible.
\label{lemma:bounded_real_lemma}
\end{lemma}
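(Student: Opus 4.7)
I plan to derive the LMI in \eqref{eq:bounded_real_lemma} from a standard ``Lyapunov inequality'' form of the discrete-time bounded real lemma by three reversible operations: two Schur-complement expansions, a congruence transformation by a positive-definite block-diagonal matrix, and a symmetric block permutation. Reversibility at every step is what converts the two conditions into an ``if and only if''. Note also that positive definiteness of the full block matrix in \eqref{eq:bounded_real_lemma} automatically forces its $(1,1)$-block to be $P>0$, so the ``symmetric $P$'' of the statement is in fact positive definite.

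First, I would invoke the dual discrete-time BRL from the cited references: $\|T_{zw}\|_\infty^2<\mu$ holds if and only if there is a symmetric $P>0$ such that
\begin{equation*}
\begin{bmatrix} P & 0 \\ 0 & \mu I \end{bmatrix}-\begin{bmatrix} A_{CL} \\ C_{CL} \end{bmatrix} P \begin{bmatrix} A_{CL}^T & C_{CL}^T \end{bmatrix}-\begin{bmatrix} B_{CL} \\ D_{CL} \end{bmatrix}\begin{bmatrix} B_{CL}^T & D_{CL}^T \end{bmatrix}>0.
\end{equation*}
Using $P>0$ and $I>0$, I would Schur-expand each of the two subtracted outer products to obtain the equivalent LMI
\begin{equation*}
\begin{bmatrix} P & 0 & A_{CL} & B_{CL} \\ 0 & \mu I & C_{CL} & D_{CL} \\ A_{CL}^T & C_{CL}^T & P^{-1} & 0 \\ B_{CL}^T & D_{CL}^T & 0 & I \end{bmatrix}>0.
\end{equation*}

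Next, I would eliminate the $P^{-1}$ block by a congruence with $\operatorname{diag}(I,I,P,I)$. Since $P$ is invertible, this preserves positive definiteness, turns the $(3,3)$-block into $PP^{-1}P=P$, and dresses the third block-row and column with factors of $P$, producing $A_{CL}P$, $PA_{CL}^T$, $C_{CL}P$, $PC_{CL}^T$ in the appropriate slots, while leaving all other blocks untouched. Finally, I would apply the symmetric block permutation that reorders the four block-rows (and the matching columns) from $(1,2,3,4)$ to $(1,3,4,2)$; such a reshuffle cannot affect positive definiteness, and a direct entry-by-entry check shows that the permuted matrix is exactly the one displayed in \eqref{eq:bounded_real_lemma}.

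The plan's only real obstacle is careful bookkeeping of block indices through the final permutation, since one must verify that each off-diagonal block lands in the right slot. No substantive algebra is required beyond standard Schur-complement manipulations, which is why the author treats this as a known fact from the robust-control literature; every operation above is invertible, so the chain of equivalences yields the stated biconditional.
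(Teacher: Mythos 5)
Your derivation is correct, but note that the paper does not actually prove this lemma at all: it is quoted as ``a standard result in the robust control literature'' with citations to de Oliveira--Geromel--Bernussou, Gahinet--Apkarian, and Dullerud--Paganini, so there is no in-paper argument to compare against. What you supply is the chain of algebraic equivalences that those references would use: starting from the dual (controllability-Gramian) Lyapunov-inequality form of the discrete-time bounded real lemma, a double Schur expansion against $\operatorname{diag}(P^{-1},I)$, a congruence by $\operatorname{diag}(I,I,P,I)$ to clear the $P^{-1}$, and the block permutation $(1,2,3,4)\mapsto(1,3,4,2)$; I checked the block bookkeeping and the permuted matrix does land exactly on \eqref{eq:bounded_real_lemma}, with the $(1,1)$ block correctly forcing $P>0$ even though the statement only says ``symmetric.'' Be aware, though, of what your proof does and does not establish: the genuinely analytic content --- that $\|T_{zw}\|_\infty^2<\mu$ is equivalent to feasibility of the dual Lyapunov inequality (including the fact that feasibility implies Schur stability of $A_{CL}$, since $P-A_{CL}PA_{CL}^T>B_{CL}B_{CL}^T\ge 0$) --- is itself the bounded real / KYP lemma, which you import from the same references the paper cites. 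So your argument is best read as a verification that the paper's particular arrangement of blocks is congruent-and-permutation-equivalent to the textbook form, rather than an independent proof of the norm characterization; that is a reasonable and useful thing to write down, and it is exactly the level of detail the paper omits.
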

Next, we include a review on FIR realizations from \cite{rotstein_sideris_1993, rotstein_sideris_1994}. For this review, we will temporarily abuse the notation by also using $z$ for the traditional $z$-transform variable. If $Q(z)$ is an, $l$-input, FIR filter with $Q(z)=\sum_{i=0}^{n-1}Q_iz^{-i}$, then one state-space realization is: 
\begin{equation}
Q = \left[
\begin{array}{c|c}
A_q^{n-1} & E_{n-1}^{n-1}\\
\hline
[Q_{n-1},\dots,Q_1] & Q_0
\end{array}
\right] 
\end{equation}
with 
\begin{equation}
A_q^{n} = \left[
\begin{array}{ccccc}
0 & I_l & 0 & \dots & 0\\
0 & 0 & I_l & \dots & 0\\
\vdots & \vdots & \vdots & \dots & I_l\\
0 & 0 & 0 & \dots & 0
\end{array}
\right]\in\mathbb{R}^{nl\times nl}
\end{equation}
\begin{equation}
E_1^{n} = \left[
\begin{array}{c}
I_l\\
0\\
\vdots\\
0
\end{array}
\right],\dots,
E_n^{n} = \left[
\begin{array}{c}
0\\
0\\
\vdots\\
I_l
\end{array}
\right]
\in\mathbb{R}^{nl\times l}
\end{equation}
For example, if
\begin{equation}
\begin{array}{ccc}
Q(z)=[1 + 2z^{-1} + 3z^{-2} & 1 + 2z^{-1} + 3z^{-2}  & 1 + 2z^{-1} + 3z^{-2}] 
\end{array}
\end{equation}
then we will assume its state-space description has the form:
\begin{align}
&A_q=\left[
\begin{array}{cc}
0 & I_3\\
0 & 0 
\end{array}
\right],\;
B_q=\left[
\begin{array}{c}
0 \\
I_3
\end{array}
\right],\;
C_q=\left[
\begin{array}{cc}
Q_2 & Q_1
\end{array}
\right],D_q = Q_0,\text{ with }\\
&Q_0 = \left[
\begin{array}{ccc}
1 & 1 & 1
\end{array}
\right],\;Q_1 = \left[
\begin{array}{ccc}
2 & 2 & 2
\end{array}
\right],\text{ and }Q_2 = \left[
\begin{array}{ccc}
3 & 3 & 3
\end{array}
\right].\notag
\end{align}
Thus, besides being automatically stable, FIR filters have the satisfying property of clearly identifying entries in its transfer function with entries in its state-space description. (Clearly, $Q(j,k)=0$ if, and only if, $Q_i(j,k)=0$ for all $i$.) This obviates any difficulty that comes with trying to associate entries in a general transfer matrix (i.e., one not required to be an FIR filter) with its state-space description via the transformation $G(z) = C(zI-A)^{-1}B+D$ which has an unfriendly inversion in the way. Furthermore, for a fixed $n$, $A_q$ and $B_q$ are also fixed. We will use this property, in the next section, where we show that Problem 1 reduces to a static output feedback problem, for an augmented system, when we require that:
\begin{equation}
K = \left[
\begin{array}{c|c}
A_q & B_q\\
\hline
C_q & D_q
\end{array}
\right].
\label{eq:K_fir}
\end{equation}

%%%%%%%%%%%%%%%%%%%%%%%%%%%%%%%%%%%
\section{Main result}
In this section, we prove the following claim.
\begin{lemma}
Problem 1 reduces to a static output feedback problem for the augmented system $(A_o,B_o,C_o,D_o)$ where 
\begin{align}
A_o &= \left[
\begin{array}{cc}
A & 0\\
B_qC_2 & A_q
\end{array}
\right],\quad B_o = \left[
\begin{array}{c}
B_1\\
B_qD_{21}
\end{array}
\right], \\
C_o &= \left[
\begin{array}{cc}
C_1 & 0
\end{array}
\right],\quad D_o=D_{11}.\notag
\label{eq:Ao_system}
\end{align}
\end{lemma}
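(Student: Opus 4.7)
My approach is to substitute the FIR realization (\ref{eq:K_fir}) into the closed-loop equations and then read off the resulting closed loop as a static output feedback acting on the augmented system $(A_o, B_o, C_o, D_o)$, with the static gain carrying exactly the design freedom left after fixing $A_q$ and $B_q$.

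First, letting $\xi_k$ denote the FIR filter state, the controller acts as $\xi_{k+1} = A_q\xi_k + B_q y_k$ and $u_k = C_q\xi_k + D_q y_k$. Substituting $y_k = C_2 x_k + D_{21}w_k$ into the plant equations (\ref{eq:system_equations}) and taking the augmented state $\tilde{x}_k := [x_k^T,\; \xi_k^T]^T$, one computes
\begin{align*}
A_{CL} &= \begin{bmatrix} A+B_2 D_q C_2 & B_2 C_q \\ B_q C_2 & A_q \end{bmatrix}, & B_{CL} &= \begin{bmatrix} B_1+B_2 D_q D_{21} \\ B_q D_{21} \end{bmatrix}, \\
C_{CL} &= \begin{bmatrix} C_1+D_{12}D_q C_2 & D_{12}C_q \end{bmatrix}, & D_{CL} &= D_{11}+D_{12}D_q D_{21}.
\end{align*}

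The key structural observation is that every one of these four matrices splits as a piece that depends only on $(A,B_1,B_2,C_1,C_2,D_{11},D_{12},D_{21})$ together with the fixed $A_q, B_q$, plus a piece that is affine in the row block $K_{sf} := [D_q,\; C_q]$. Introducing
\[
B_o^u := \begin{bmatrix} B_2 \\ 0 \end{bmatrix}, \qquad C_o^y := \begin{bmatrix} C_2 & 0 \\ 0 & I \end{bmatrix}, \qquad D_{21}^o := \begin{bmatrix} D_{21} \\ 0 \end{bmatrix},
\]
a direct verification gives
\[
A_{CL} = A_o + B_o^u K_{sf} C_o^y, \qquad B_{CL} = B_o + B_o^u K_{sf} D_{21}^o,
\]
\[
C_{CL} = C_o + D_{12} K_{sf} C_o^y, \qquad D_{CL} = D_o + D_{12} K_{sf} D_{21}^o,
\]
with $A_o, B_o, C_o, D_o$ as in the lemma. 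This is exactly the closed loop obtained by applying the static gain $K_{sf}$ from a fictitious measurement $y_k^o := C_o^y \tilde{x}_k + D_{21}^o w_k$ back to the control input $u_k$ of the augmented system; the bounded-real condition $\|T_{zw}\|_\infty^2 < \mu$ then becomes, via Lemma~\ref{lemma:bounded_real_lemma}, an LMI in the unknowns $P$ and $K_{sf}$.

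Finally, the sparsity requirement $K \in S$ must be pulled back to the new variables. Because $K(z) = D_q + \sum_{i=1}^{n-1} C_q E_i^{n-1} z^{-i}$, the coefficients of $K(z)$ can be read off the entries of $[D_q,\; C_q]$, and (as noted just before the lemma) a transfer-function entry of $K$ is zero precisely when the corresponding entries of all $Q_i$ vanish. Hence $K \in S$ imposes a set of linear equality constraints on $K_{sf}$, which fit seamlessly into the static-output-feedback formulation. The main obstacle I anticipate is not conceptual but organizational: one must carefully identify the control-input channel $B_o^u$, the augmented-measurement channel $C_o^y$, and the direct control-to-$z$ feedthrough (which is $D_{12}$, and is not part of the quadruple $(A_o,B_o,C_o,D_o)$ named in the lemma) in order to unambiguously interpret the ``static output feedback problem'' promised by the statement.
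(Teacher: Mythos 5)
Your proposal is correct and follows essentially the same route as the paper: compute the closed loop under the FIR controller and then exhibit it as an affine perturbation of $(A_o,B_o,C_o,D_o)$ by a static gain collecting $C_q$ and $D_q$, with the sparsity pattern becoming linear equality constraints on that gain. The only difference is cosmetic --- you order the gain as $[D_q,\; C_q]$ with correspondingly permuted $C_o^y$ and $D_{21}^o$, whereas the paper uses $K_o=[C_q,\; D_q]$ with $\tilde C$ and $\tilde D_{21}$.
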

\begin{proof}
With $K$ as in (\ref{eq:K_fir}), the closed-loop system is given by 
\begin{align}
A_{CL} &= \left[
\begin{array}{cc}
A+B_2D_qC_2 & B_2C_q\\
B_qC_2 & A_q
\end{array}
\right],\; B_{CL} = \left[
\begin{array}{c}
B_1+B_2D_qD_{21}\\
B_qD_{21}
\end{array}
\right]\\
C_{CL} &= \left[
\begin{array}{cc}
C_1+D_{12}D_qC_2 & D_{12}C_q
\end{array}
\right], \; D_{CL}=D_{11}+D_{12}D_qD_{21}.
\label{eq:closed_loop_system}
\end{align} Following standard approaches \cite{mario_text,gahinet_apkarian}, we can parametrize the closed-loop system as:
\begin{align}
A_{CL}  &= A_o + \tilde{B}K_o\tilde{C},\; B_{CL} = B_o + \tilde{B}K_o\tilde{D}_{21}\\
C_{CL} &= C_o + \tilde{D}_{12} K_o \tilde{C},\; D_{CL} = D_o + \tilde{D}_{12}K_o \tilde{D}_{21}
\label{eq:parametrized_closed_loop_system}
\end{align}
with
\begin{align}
&\tilde{B} = \left[
\begin{array}{c}
B_2\\
0
\end{array}
\right],\; \tilde{C} = \left[
\begin{array}{cc}
0 & I\\
C_2 & 0
\end{array}
\right],\;\tilde{D}_{12}=D_{12},\\
&\tilde{D}_{21} = \left[\begin{array}{c}
0\\
D_{21}
\end{array}\right],\text{ and }K_o = \left[\begin{array}{cc} C_q & D_q \end{array}\right].
\label{eq:tilde_system}
\end{align}
Thus, the stability of the closed-loop system is determined by $A_o + \tilde{B}K_o\tilde{C}$; which the reader can recognize as the traditional output-feedback form.
\end{proof}

Substituting the parametrized system into Lemma \ref{lemma:bounded_real_lemma} gives the required condition:

\begin{align}
F(P,K_o) \triangleq \left[
\begin{array}{cccc}
P & (A_o+\tilde{B}K_o\tilde{C})P & B_o+\tilde{B}K_o\tilde{D}_{21} & 0\\
P(A_o+\tilde{B}K_o\tilde{C})^T & P & 0 & P(C_o+\tilde{D}_{12} K_o \tilde{C})^T\\
(B_o+\tilde{B}K_o\tilde{D}_{21})^T & 0 & I & (D_o+\tilde{D}_{12}K_o \tilde{D}_{21})^T\\
0 & (C_o+\tilde{D}_{12} K_o \tilde{C})P & D_o+\tilde{D}_{12}K_o \tilde{D}_{21} & \mu I
\end{array}
\right] >0
\label{eq:full_condition}
\end{align}

At this point, traditional approaches would deal with the non-convexity of this condition by lifting the non-convex terms, solving Lemma \ref{lemma:bounded_real_lemma}, and extracting $K_o$ from the lifted variable using an inversion. The inversion is problematic when enforcing sparsity constraints and usually researchers require $P$ to be diagonal so that a sparsity pattern can more easily be imposed on the controller. However, there may not be such a $P$. Here, we take an iterative approach. Define the matrix, $F_0$, as: 

\begin{align}
F_0(P,K_o) \triangleq
\left[
\begin{array}{cccc}
P & A_oP+\tilde{B}K_o\tilde{C} & B_o+\tilde{B}K_o\tilde{D}_{21} & 0\\
PA_o^T+(\tilde{B}K_o\tilde{C})^T & P & 0 & PC_o^T+(\tilde{D}_{12} K_o \tilde{C})^T\\
(B_o+\tilde{B}K_o\tilde{D}_{21})^T & 0 & I & (D_o+\tilde{D}_{12}K_o \tilde{D}_{21})^T\\
0 & C_oP+\tilde{D}_{12} K_o \tilde{C} & D_o+\tilde{D}_{12}K_o \tilde{D}_{21} & \mu I
\end{array}
\right]>0 
\end{align}

which simply removes $P$ from the non-convex terms. Define the two feasibility programs, $\mathbb{P}_0$ and $\mathbb{P}$, as:
\begin{align}
&\mathbb{P}_0(P,K_o) \mapsto \left\lbrace 
\begin{array}{c}
\min_{P,K_o} 0\\
\text{ subject to }
Q_i\in S,\;\forall i\\
F_0(P,K_o)>0
\end{array}
\right.\;\text{ and }\\
&\mathbb{P}(P,K_o) \mapsto \left\lbrace 
\begin{array}{c}
\min_{P,K_o} 0\\
\text{ subject to }
Q_i\in S,\;\forall i\\
F(P,K_o)>0
\end{array}
\right.
\end{align}
where $Q_i\in S$ is enforced using equality constraints determined by the zeros in $S$. 

We now define Algorithm \ref{alg:main_algorithm}, to synthesize $\mu$-optimal controllers, using SDP solvers. Below, the notation $\mathbb{P}(P,K_o)^k$ means $k$-iterations of the feasibility program $\mathbb{P}$ (which is easily specified using the maximum-iterations input of the solver) and the bolded terms identify the decision variables in the program while the other terms are fixed, from the previous iteration.

\begin{algorithm}
\caption{Sparse $\Hinf$ Control Synthesis}\label{alg:main_algorithm}
\renewcommand{\algorithmicrequire}{\textbf{Input:}}
\renewcommand{\algorithmicensure}{\textbf{Output:}}
\begin{algorithmic}[1]
\Require $\mu>0$, FIR order $n$, sparsity pattern $S$
\Ensure $(P,K)$
\State $K_o \gets \mathbb{P}_0({\bf P, K_o})^{k_0}$ 
\While{$F(P,K_o)\le 0$}
\State $P \gets \mathbb{P}({\bf P},K_o)^{k_1}$
\If{$F(P,K_o)>0$}
   \State \textbf{return} $(P,K_o)$
\Else   
   \State $K_o \gets \mathbb{P}(P, {\bf K_o})^{k_2}$
\EndIf   
\EndWhile
\end{algorithmic}
\end{algorithm}

In practice, Algorithm \ref{alg:main_algorithm} converged faster when the SDP solver was allowed more iterations when searching for $P$ (i.e. when $k_1>k_2$). We also found that changing $\mu$ affected the speed of convergence. Setting $k=[2,5,2]$ and then increasing $k_0$ with $\mu$ set larger than the squared ($\mathcal{H}_\infty$) norm of the closed-loop system with the full controller is a good place to start\footnote{For convenience, full $H_\infty$ controller can be obtained using the MATLAB function {\tt hinfsyn}.}.

\subsection{Pattern Discovery}
In \cite{lin_fardad_jovanovic}, Lin et al. identified sparsity patterns using ``sparsity promoting penalty functions''. Since $\mathbb{P}_0$ is a relaxation of $\mathbb{P}$, it can be used to discover sparse patterns by simply adding an objective to $\mathbb{P}_0$ that minimizes, for example, the re-weighted $\ell_1$-norm of $K_o$ \cite{candes_wakin_boyd_arxiv}. %Since our goal is to make sparse controllers of arbitrary order, in addition to discovering sparsity patterns, Algorithm \ref{alg:sparsest_gain_algorithm} can identify the lowest order controller necessary to satisfy the $\mathcal{H}_\infty$ bound, $\mu$.
\begin{algorithm}
\caption{Pattern Discovery}\label{alg:sparsest_gain_algorithm}
\renewcommand{\algorithmicrequire}{\textbf{Input:}}
\renewcommand{\algorithmicensure}{\textbf{Output:}}
\begin{algorithmic}[1]
\Require $\mu>0$, FIR order $n$, max iterations $N$, re-weighting constant $\epsilon>0$
\Ensure $\pattern(K_o)$
\State $W_0 \gets {\bf{1}}$ 
\While{$k<N$}
\State ${K_o}_k \gets \left\lbrace 
\begin{array}{c}
\min_{P,K_o} \| W_{k-1}\cdot {K_o}_{k-1}\|_1\\
\text{ subject to }
F_0(P,K_o)>0
\end{array}
\right.$
\State ${K_o}_{k-1} \gets {K_o}_k$
\State $W_k(i,j) \gets 1/\left(|{K_o}_k(i,j)+\epsilon|\right)$
\State $k \gets k + 1$
\EndWhile
\State \textbf{return} $\pattern(K_o)$
\end{algorithmic}
\end{algorithm}
%%%%%%%%%%%%%%%%%%%%%%%%%%%%%%%%%%%%%

\section{Examples}
The examples below were solved using MATLAB/CVX/SeDuMi/SDPT3 \cite{matlab,cvx,sedumi,sdpt3}.  \\\\
\textbf{Example 1}
This example was used to demonstrate distributed $\mathcal{H}_\infty$ synthesis in \cite{veillette_et_al_1992} and distributed $\mathcal{H}_2$ synthesis in \cite{oliveira_geromel_bernussou_2000}. The goal is to design a diagonal controller for the (unstable) continuous-time plant below.

\begin{align}
A &= \left[
\begin{array}{cccc}
-2 & 1 & 1 & 1\\
3 & 0 & 0 & 2\\
-1 & 0 & -2 & -3\\
-2 & -1 & 2 & -1
\end{array}
\right],\; B_1 = \left[
\begin{array}{ccc}
1 & 0 & 0\\
0 & 0 & 0\\
1 & 0 & 0\\
0 & 0 & 0
\end{array}
\right]\\
 B_2 &= \left[
\begin{array}{cc}
0 & 0\\
1 & 0\\
0 & 0\\
0 & 1
\end{array}
\right],\;
C_1 = \left[
\begin{array}{cccc}
1 & 0 & -1 & 0\\
0 & 0 & 0 & 0\\
0 & 0 & 0 & 0
\end{array}
\right]\notag\\
C_2 &= \left[
\begin{array}{cccc}
1 & 0 & 0 & 0\\
0 & 0 & 1 & 0
\end{array}
\right],\;
D_{11} = 0_3, D_{12} = \left[
\begin{array}{cc}
0 & 0\\
1 & 0\\
0 & 1
\end{array}
\right],\notag\\
D_{21} &= \left[
\begin{array}{ccc}
0 & 1 & 0\\
0 & 0 & 1
\end{array}
\right],\;D_{22}=0_2\notag
\end{align}
After discretizing using a zero-order hold with a 0.1s sampling period, we redefine $A,B_1,$ and $B_2$ as:
\begin{align}
A &= \left[
\begin{array}{cccc}
0.8189 & 0.08627 & 0.09004 & 0.08133\\
0.2524 & 1.003 & 0.03134 & 0.2004\\
-0.05449 & 0.01017 & 0.7901 & -0.258\\
-0.1918 & -0.1034 & 0.1602 & 0.8604
\end{array}
\right],\\
B_1 &= \left[
\begin{array}{ccc}
0.09531 & 0 & 0\\
0.01447 & 0 & 0\\
0.08618 & 0 & 0\\
-0.001083 & 0 & 0
\end{array}
\right]\\
 B_2 &= \left[
\begin{array}{cc}
0.004532 & 0.004367\\
0.1001 & 0.01005\\
0.0003383 & -0.01361\\
-0.005126 & 0.09363
\end{array}
\right]
\end{align}
Running Algorithm \ref{alg:main_algorithm}, using $\mu=24$ and $k=[10,5,2]$, we obtain the following static controller ($P$ is not shown to conserve space):
\begin{align}
K_0 &= \left[
\begin{array}{cc}
-1.2775 & 0\\
0 & -0.5685
\end{array}
\right]
\end{align}
which results in $\|T_{zw}\|_\infty=1.85$ for the continuous-time system. Using the same $k$, and $\mu=9$, we obtain the following first-order controller:
\begin{align}
K_1 &= \left[
\begin{array}{cc}
\frac{-0.6543 z - 0.5344}{z} & 0\\
0 & \frac{-0.1993 z - 0.2237}{z}
\end{array}
\right]
\end{align}
which results in $\|T_{zw}\|_\infty=1.9043$ for the discrete-time system.

The continuous-time equivalent was identified (individually), using the MATLAB function {\tt tfest(id\_data,1,0)}, to obtain
\begin{align}
K_1 &= \left[
\begin{array}{cc}
\frac{-8.469}{s + 7.382} & 0\\
0 & \frac{-4.167}{s + 10.24}
\end{array}
\right]
\end{align}
which results in $\|T_{zw}\|_\infty=1.9517$ for the continuous-time system.
\begin{align}
K_2 &= \left[
\begin{array}{cc}
\frac{-0.6377 z^2 - 0.2514 z - 0.3369}{z^2} & 0\\
0 & \frac{-0.228 z^2 - 0.1406 z - 0.2332}{z^2}
\end{array}
\right]
\end{align}
was obtained using $\mu=9$ and $k=[12,5,2]$ -- this controller results in $\|T_{zw}\|_\infty=1.9795$. 

The continuous-time equivalent is given by: 
\begin{align}
K_2 &= \left[
\begin{array}{cc}
\frac{-189.7}{s^2 + 40.21 s + 157.8} & 0\\
0 & \frac{-6.11e04}{s^2 + 565.7 s + 1.254e05}
\end{array}
\right]
\end{align}
and results in $\|T_{zw}\|_\infty=1.9711$. All three controllers have slightly better closed-loop $\mathcal{H}_\infty$ performance than the ones obtained in \cite{veillette_et_al_1992}, the best of which result in $\|T_{zw}\|_\infty=1.995$.\\\\
\textbf{Example 2} 
The following continuous-time example is from \cite{rotkowitz_lall_2006}.
\begin{align}
G(s) & = \left[
\begin{array}{ccccc}
\frac{1}{s+1} & 0 & 0 & 0 & 0\\
\frac{1}{s+1} & \frac{1}{s-1} & 0 & 0 & 0\\
\frac{1}{s+1} & \frac{1}{s-1} & \frac{1}{s+1} & 0 & 0\\
\frac{1}{s+1} & \frac{1}{s-1} & \frac{1}{s+1} & \frac{1}{s+1} & 0\\
\frac{1}{s+1} & \frac{1}{s-1} & \frac{1}{s+1} & \frac{1}{s+1} & \frac{1}{s-1}
\end{array}
\right]
\end{align}
For this model, the following realization is used:
\begin{align}
A &= \left[
\begin{array}{ccccc}
-1 & 0 & 0 & 0 & 0\\
0 & 1 & 0 & 0 & 0\\
0 & 0 & -1 & 0 & 0\\
0 & 0 & 0 & -1 & 0\\
0 & 0 & 0 & 0 & 1
\end{array}
\right]\\ B_1 &= \left[
\begin{array}{ccccc}
0.2 & 0 & 0 & 0 & 0\\
0 & 0.2 & 0 & 0 & 0\\
0 & 0 & 0.2 & 0 & 0\\
0 & 0 & 0 & 0.2 & 0\\
0 & 0 & 0 & 0 & 0.1
\end{array}
\right]\\
B_2 &= \left[
\begin{array}{ccccc}
2 & 0 & 0 & 0 & 0\\
0 & 2 & 0 & 0 & 0\\
0 & 0 & 2 & 0 & 0\\
0 & 0 & 0 & 2 & 0\\
0 & 0 & 0 & 0 & 1
\end{array}
\right]\\
C_1 &= \left[
\begin{array}{ccccc}
0.05 & 0 & 0 & 0 & 0\\
0.05 & 0.05 & 0 & 0 & 0\\
0.05 & 0.05 & 0.05 & 0 & 0\\
0.05 & 0.05 & 0.05 & 0.05 & 0\\
0.05 & 0.05 & 0.05 & 0.05 & 0.1
\end{array}
\right]\\ 
C_2 &= \left[
\begin{array}{ccccc}
0.5 & 0 & 0 & 0 & 0\\
0.5 & 0.5 & 0 & 0 & 0\\
0.5 & 0.5 & 0.5 & 0 & 0\\
0.5 & 0.5 & 0.5 & 0.5 & 0\\
0.5 & 0.5 & 0.5 & 0.5 & 1
\end{array}
\right]\notag\\
D_{11} &= D_{12} = D_{22} = D_{22} = 0_5.\notag
\end{align}
After discretizing, using a sampling period of 0.05 s, redefine $A,\,B_1,$ and $B_2$ as:
\begin{align}
A &= \left[
\begin{array}{ccccc}
0.9512 & 0 & 0 & 0 & 0\\
0 & 1.051 & 0 & 0 & 0\\
0 & 0 & 0.9512 & 0 & 0\\
0 & 0 & 0 & 0.9512 & 0\\
0 & 0 & 0 & 0 & 1.051
\end{array}
\right]\\
B_1 &= \left[
\begin{array}{ccccc}
0.009754 & 0 & 0 & 0 & 0\\
0 & 0.01025 & 0 & 0 & 0\\
0 & 0 & 0.009754 & 0 & 0\\
0 & 0 & 0 & 0.009754 & 0\\
0 & 0 & 0 & 0 & 0.005127
\end{array}
\right], B_2 &= 10B_1
\end{align}
As in \cite{rotkowitz_lall_2006}, we would also like to find a first-order controller that satisfies the sparsity constraint:
\begin{equation}
S = \left[
\begin{array}{ccccc}
0 & 0 & 0 & 0 & 0\\
0 & 1 & 0 & 0 & 0\\
0 & 0 & 0 & 0 & 0\\
0 & 0 & 0 & 0 & 0\\
0 & 0 & 0 & 0 & 1
\end{array}
\right]
\end{equation} Setting $\mu=0.1$ and $k=[2,5,2]$, Algorithm \ref{alg:main_algorithm} obtains the following controller:
\begin{align}
&K_1 = \left[
\begin{array}{ccccc}
0 & 0 & 0 & 0 & 0\\
0 & \frac{-9.265 z - 5.227}{z} & 0 & 0 & 0\\
0 & 0 & 0 & 0 & 0\\
0 & 0 & 0 & 0 & 0\\
0 & 0 & 0 & 0 & \frac{-12.68 z - 15.84}{z}
\end{array}
\right]
\end{align}
which results in $\|T_{zw}\|_\infty=0.0163$. 

The continuous-time equivalent was identified, using the MATLAB function {\tt tfest(id\_data,1,0)}, and re-assembled to obtain:
\begin{align}
&K_1 = \left[
\begin{array}{ccccc}
0 & 0 & 0 & 0 & 0\\
0 & \frac{-120}{s + 8.501} & 0 & 0 & 0\\
0 & 0 & 0 & 0 & 0\\
0 & 0 & 0 & 0 & 0\\
0 & 0 & 0 & 0 & \frac{-616.7}{s + 22.48}
\end{array}
\right]
\end{align} which results in the same closed-loop cost. Additionally, a static controller can be obtained (using the same settings):
\begin{align}
&K_0 = \left[
\begin{array}{ccccc}
0 & 0 & 0 & 0 & 0\\
0 & -9.1093 & 0 & 0 & 0\\
0 & 0 & 0 & 0 & 0\\
0 & 0 & 0 & 0 & 0\\
0 & 0 & 0 & 0 & -12.372
\end{array}
\right]
\end{align}
which results in the same cost, $\|T_{zw}\|_\infty=0.0165$, for both systems.\\\\
\textbf{Example 3}
A limitation of our approach is that, presently, SDP solvers can only handle medium-sized problems. For instance, the examples in \cite{lin_fardad_jovanovic} were too large to solve on a 2-core computer with 4GB of memory. On the other hand, we were able to identify sparser patterns and synthesize sparser controllers than their approach. This example is a smaller version of the mass-spring example in \cite{lin_fardad_jovanovic}. The plant model is a classical mass-spring system with $N_m=8$ unit masses and spring constants.

\begin{align}
A&=\left[
\begin{array}{cc}
0 & I\\
-T & 0
\end{array}
\right],\; B_1=B_2=\left[
\begin{array}{c}
0 \\
I
\end{array}
\right],\\
C_1&=C_2=Q^{1/2},\; D_{11}=D_{21}=\left[
\begin{array}{c}
0 \\
0
\end{array}
\right],\\
D_{12}&=D_{22}=\left[
\begin{array}{c}
R^{1/2} \\
0
\end{array}
\right]
\end{align}
with $T$ an $N_m\times N_m$ tridiagonal Toeplitz matrix with $-2$ on the main diagonal and $1$ on the sub- and super diagonals. The $I$ and $0$ terms are $N_m\times N_m$ identity- and zero matrices, respectively. $R=10I$ and $Q=I_{2N_m}$. After discretizing, using a sampling period of 0.5 s, the $A,B_1,$ and $B_2$ matrices were redefined as in the previous examples. 

A controller for this system is partitioned as $K_0=[K_p, K_v]$ corresponding to the position- and velocity states. Applying Algorithm \ref{alg:sparsest_gain_algorithm}, with $\mu=80$, $n=2$, $N=40$, revealed diagonal patterns for $K_p$ and $K_v$, which agrees with \cite{lin_fardad_jovanovic}. However, the entries corresponding to $K_p$ were closer to zero and  several times smaller than the $K_v$ terms. Thus, we hypothesized that it is possible to design controllers with the pattern $S=[0,I]$ -- this agrees with our intuition that it should be possible to control this system using velocity feedback. 

 Setting $\mu=83$ and $k=[10,5,2]$, Algorithm \ref{alg:main_algorithm} obtained the following controller:
 \begin{align}
 K_p &= 0_{N_m}\\
K_v &= \diag\left(\left[
\begin{array}{c}
-0.7491\\
-0.706\\
-0.7255\\
-0.7316\\
-0.7316\\
-0.7255\\
-0.706\\
-0.7529
\end{array}
\right]\right)
 \end{align}
at a cost of $\|T_{zw}\|_\infty=8.2909$. %If $K_v$ is allowed to be diagonal, a controller can be obtained that results in a cost of $\|T_{zw}\|_\infty=4.5642$. 

%%%%%%%%%%%%%%%%%%%%%%%%%%%%%%%%%%%%%
\section{Conclusions}
We presented a very simple method for solving a very difficult problem that seems to work well for strongly stabilizable plants. Broadly speaking, the idea behind the solution is in line with other approaches to attacking non-convex problems: by breaking it up into smaller problems or steps\footnote{Indeed, the standard LMI approach to robust control synthesis is a great example of how to break-up a non-convex problem into separate convex problems.}. The method is aided by the fact that $\mathbb{P}_0$ tends to provide very good initial guesses and probably deserves more attention on its own. 

The method is flexible and, as in Algorithm \ref{alg:sparsest_gain_algorithm}, the feasibility programs can include other objectives. The flexibility can be used to optimize additional objectives and it can be used to enhance convergence for very difficult cases. For example, in Example 3, it is possible to synthesize a still sparser $K_v$ if we minimize $\|P\|_2+\|Q_i\|_2$ in $\mathbb{P}_0$. We left this out for clarity, but it could be researched further.

%%%%%%%%%%%%%%%%%%%%%%%%%%%%%%%%%%%%%
\nocite{*}
{\small
\bibliographystyle{plain}
\bibliography{references}
}

%%%%%%%%%%%%%%%%%%%%%%%%%%%%%%%%%%%%%

\end{document}